\newcommand{\monthyear}[1]{%
  \def\@monthyear{\uppercase{#1}}}
\newcommand{\volnumber}[1]{%
  \def\@volnumber{\uppercase{#1}}}
\def\ps@plain{\ps@empty
  \def\@oddfoot{\@monthyear \hfil \thepage}%
  \def\@evenfoot{\thepage \hfil \@volnumber}}
\def\ps@firstpage{\ps@plain}
\def\ps@headings{\ps@empty
  \def\@evenhead{%
    \setTrue{runhead}%
    \def\thanks{\protect\thanks@warning}%
    \uppercase{The Fibonacci Quarterly}\hfil}%
  \def\@oddhead{%
    \setTrue{runhead}%
    \def\thanks{\protect\thanks@warning}%
    \hfill\uppercase{On odd perfect numbers}}%
  \let\@mkboth\markboth
  \def\@evenfoot{%
    \thepage \hfil \@volnumber}%
  \def\@oddfoot{%
    \@monthyear \hfil \thepage}%
  }%
\newcommand{\Z}{{\mathbb Z}}
\theoremstyle{plain}
\numberwithin{equation}{section}
\newtheorem{thm}{Theorem}[section]
\newtheorem{theorem}[thm]{Theorem}
\newtheorem{lemma}[thm]{Lemma}
\begin{document}
\monthyear{Month Year}
\volnumber{Volume, Number}
\setcounter{page}{1}

\title{A note on odd perfect numbers}
\author{Jose Arnaldo B. Dris}
\address{Far Eastern University\\
                   Manila, Philippines}
\email{josearnaldobdris@gmail.com}

\author{Florian Luca}
\address{School of Mathematics\\
                  University of the Witwatersrand \\
                  Private Bag X3, Wits 2050\\
                  South Africa\\
                  \and
                  Centro de Ciencias Matem\'aticas\\
                  UNAM, Morelia, Mexico}
\email{florian.luca@wits.ac.za}

\begin{abstract}
In this note, we show that if $N$ is an odd perfect number and $q^{\alpha}$ is some prime power exactly dividing it, then $\sigma(N/q^{\alpha})/q^{\alpha}>5$. 
In general, we also show that if $\sigma(N/q^{\alpha})/q^{\alpha}<K$, where $K$ is any constant, then $N$ is bounded by some function depending on $K$.
\end{abstract}

\maketitle

\section{Introduction}

For a positive integer $N$ we write $\sigma(N)$ for the sum of the divisors of $N$. A number $N$ is {\it perfect} if $\sigma(N)=2N$.  Even perfect numbers have been characterized by Euler. Namely, $N$ is an even perfect number if and only if $N=2^{p-1}(2^p-1)$, where $2^p-1$ is prime. Hence, the only obstruction in proving that there are infinitely many of them lies with proving that there exist infinitely many primes of the form $2^p-1$.

We know less about odd perfect numbers. No example has been found, nor do we have a proof that they don't exist. If they exist, then they must have at least $7$ distinct prime factors, a result of Pomerance from \cite{Pom}. The bound $7$ has been raised to $9$ recently in \cite{Nie}. Brent et. al. \cite{Betal} showed that 
$N>10^{300}$. The exponent $300$ has been raised to $1500$ in the recent work \cite{Ochetal}.

Let $N$ be perfect and let $q^{\alpha}\| N$, where $q$ is prime. Recall that the notation $q^{\alpha}\| N$ stands for the power of $q$ exactly dividing $N$, namely $q^{\alpha}\mid N$ but $q^{\alpha+1}\nmid N$. Then
$$
2N=2q^{\alpha} \left(\frac{N}{q^{\alpha}}\right)=\sigma(N)=\sigma(q^{\alpha}) \sigma\left(\frac{N}{q^{\alpha}}\right),
$$
and since $q^{\alpha}$ is coprime to $\sigma(q^{\alpha})$, it follows that $\sigma(N/q^{\alpha})/q^{\alpha}$ is an integer divisor of $2N$. When $N=2^{p-1}(2^p-1)$ is even, then 
$$
\frac{\sigma(N/q^{\alpha})}{q^{\alpha}}=\left\{ \begin{matrix} 2 & {\text{\rm if}} & q=2\\
1 & {\text{\rm if}} & q=2^{p}-1.\\ \end{matrix}\right.
$$
Here, we study this statistic when $N$ is an odd perfect number. We prove:

\begin{theorem}
\label{thm:1}
If $N$ is an odd perfect number and $q^{\alpha}\| N$ is a prime power exactly dividing $N$, then $\sigma(N/q^{\alpha})/q^{\alpha}>5$.
\end{theorem}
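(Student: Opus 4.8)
The plan is to work with the identity already obtained in the introduction. Writing $M = N/q^\alpha$ and using $\sigma(q^\alpha)\sigma(M) = 2N = 2q^\alpha M$, the quantity to be bounded is
$$D := \frac{\sigma(M)}{q^\alpha} = \frac{2N}{q^\alpha\,\sigma(q^\alpha)},$$
which the introduction shows is a positive integer dividing $2N$. Since $N$ is odd, every divisor of $2N$ is either odd or exactly twice an odd number, so in particular $4 \nmid D$.

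First I would pin down the parity of $D$ precisely using Euler's structure theorem: $N = p^k m^2$ with $p \equiv k \equiv 1 \pmod 4$ and $p \nmid m$. For every prime $r \ne p$ dividing $N$ the exponent is even, so $\sigma(r^{2c})$ is a sum of an odd number of odd terms and hence odd; while $\sigma(p^k)$ is a sum of $k+1 \equiv 2 \pmod 4$ terms each $\equiv 1 \pmod 4$, so $\sigma(p^k) \equiv 2 \pmod 4$. Consequently, if $q = p$ then $\sigma(M)$ is odd and $D$ is odd, whereas if $q \ne p$ then $\sigma(M) \equiv 2 \pmod 4$ and, as $q^\alpha$ is odd, $D \equiv 2 \pmod 4$. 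In either case $4 \nmid D$, so the only way $D \le 5$ could fail the theorem is $D \in \{1,2,3,5\}$; the value $D = 4$ is eliminated outright.

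Next I would extract a size constraint. Because $q \ge 3$ we have $\sigma(q^\alpha) < \frac{q}{q-1}q^\alpha \le \frac32 q^\alpha$, whence
$$D = \frac{2N}{q^\alpha\,\sigma(q^\alpha)} > \frac{4N}{3\,q^{2\alpha}}.$$
Thus $D \le 5$ forces $q^{2\alpha} > 4N/15$, i.e. $q^\alpha > \frac{2}{\sqrt{15}}\sqrt N$, so the prime power $q^\alpha$ must be comparable to $\sqrt N$ and correspondingly $M < 4q^\alpha$. Combined with $N > 10^{1500}$ from \cite{Ochetal}, this makes $q^\alpha$ enormous.

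Finally I would rule out each of $D \in \{1,2,3,5\}$ separately. Here the governing equations are $\sigma(M) = Dq^\alpha$ together with $\sigma(q^\alpha) = 2M/D$; rearranged, they say that $\sigma$ applied to the non-$q$ part of $N$ is, up to the tiny factor $D$, a pure power of $q$, and symmetrically that $\sigma(q^\alpha)$ is essentially $M$. I expect this last step to be the main obstacle: one must show that such rigid multiplicative coincidences cannot occur for a genuine odd perfect number, and a naive rational-approximation bound is useless here because the two equations are exactly consistent at the level of rationals. The route I would take is to exploit the factorization of $\sigma(q^\alpha) = (q^{\alpha+1}-1)/(q-1)$ — in particular the primitive prime divisors guaranteed by the Bang--Zsygmondy theorem when $\alpha \ge 2$, each forcing a new prime factor $\ell \equiv 1 \pmod{\alpha+1}$ of $N$ — to constrain the pair $(q,\alpha)$ so tightly that the resulting $N$ is bounded by an explicit constant; checking that this constant lies below $10^{1500}$ then contradicts the known lower bound. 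This is exactly the mechanism behind the general statement in the abstract (take $K = 6$), and Theorem~\ref{thm:1} is its specialization.
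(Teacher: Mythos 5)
Your setup is sound as far as it goes: the identification $D=\sigma(N/q^{\alpha})/q^{\alpha}=2N/(q^{\alpha}\sigma(q^{\alpha}))$, the observation that $4\nmid D$ (which disposes of $D=4$), and the size constraint $q^{\alpha}\gg\sqrt{N}$ are all correct. But the proof stops exactly where the theorem begins: the cases $D\in\{1,2,3,5\}$ are never ruled out, only a plan is sketched, and the plan as described does not work. Applying Bang--Zsygmondy to $\sigma(q^{\alpha})=(q^{\alpha+1}-1)/(q-1)$ produces, for each large divisor of $\alpha+1$, a \emph{new} prime factor of $N$ congruent to $1$ modulo that divisor; this gives a lower bound on the number of prime factors of $N$, not an upper bound on $N$, so it cannot ``constrain $(q,\alpha)$ so tightly that the resulting $N$ is bounded.'' Your fallback --- deduce Theorem~\ref{thm:1} from the general finiteness statement with $K=6$ and check the bound against $N>10^{1500}$ --- fails on three counts: the paper's proof of Theorem~\ref{thm:2} explicitly reuses the arguments of Theorem~\ref{thm:1}, so this direction is circular; the effective bounds behind Theorem~\ref{thm:2} (Heath--Brown's $4^{4^{s+1}}$ combined with effective largest-prime-factor estimates from exponential Diophantine equations) are astronomically larger than $10^{1500}$; and finiteness is weaker than emptiness, so even a small bound would leave a finite search you have not performed.

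The paper's actual mechanism is different and worth internalizing. Writing $\sigma(p_i^{\lambda_i})=m_iq^{\beta_i}$ for each prime power $p_i^{\lambda_i}\,\|\,N/q^{\alpha}$, one has $D=m_1\cdots m_k$; since $D\le 5$ and at most one $\sigma(p_i^{\lambda_i})$ is even (only one exponent $\lambda_i$ is odd), at most one $m_i$ exceeds $1$, so \emph{all but at most one} of the $\sigma(p_i^{\lambda_i})$ are pure powers of $q$. Lemma~\ref{lem:2} then forces $\alpha+1\ge M^{1/2}$ with $M=\prod_{i>k}p_i^{\lambda_i}$, and Lemma~\ref{lem:3} bounds $q^{\beta_1}$ by raising $\sigma(q^{\alpha})=(2M/m)\,p_1^{\lambda_1}\cdots p_k^{\lambda_k}$ to the power $\Lambda=\lcm[\lambda_i+1]$ and reducing modulo $q^{\beta_1}$, yielding $q^{\beta_1}\mid(2M(q-1))^{\Lambda}\pm m^{\Lambda}$ --- nonzero precisely because $4\mid 2M(q-1)$ while $4\nmid m$, i.e., exactly the parity fact you noticed but used only to eliminate $D=4$. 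Comparing these bounds against Nielsen's result ($s\ge 8$ non-special primes) forces $q\in\{3,5\}$, after which Bang's theorem is applied to the equations $(p^{\lambda+1}-1)/(p-1)=q^{\beta}$ --- to the $\sigma(p_i^{\lambda_i})$, not to $\sigma(q^{\alpha})$ --- giving $\lambda\in\{2,4\}$, and short computations modulo $9$ and $25$ eliminate the four remaining equations. So your proposal contains two genuine ingredients of the real proof (the parity obstruction and the rigidity of the equation $\sigma(N/q^{\alpha})=Dq^{\alpha}$) but is missing the decisive ones: the reduction to $k\le 1$, the congruence argument of Lemma~\ref{lem:3}, and the application of Bang to the cofactor primes rather than to $q$.
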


This improves on a previous lower bound obtained by the first author in his M.~S.~thesis \cite{Dr}. 

The lower bound $5$ can likely be easily improved although it is not clear to us what the current numerical limit of this improvement should be. We leave this as a problem for other researchers.  In light of the above result, one may ask whether it could be the case that by imposing an upper bound on the amount $\sigma(N/q^{\alpha})/q^{\alpha}$, the number $N$ ends up being bounded as well. This is indeed so as shown by the following result.

\begin{theorem}
\label{thm:2}
For every fixed $K>5$, there are only finitely many odd perfect numbers $N$ such that for some prime power $q^{\alpha}\|N$ we have that $\sigma(N/q^{\alpha})/q^{\alpha}<K$. All such $N$ are bounded by some effectively computable number depending on $K$.
\end{theorem}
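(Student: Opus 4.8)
The plan is to convert the hypothesis into multiplicative constraints on $M:=N/q^{\alpha}$ and then to bound $q^{\alpha}$, from which a bound on $N$ is immediate. As in the introduction, set $L=\sigma(M)/q^{\alpha}$, which is a positive integer; perfectness of $N$ yields the two identities $\sigma(M)=q^{\alpha}L$ and $2M=\sigma(q^{\alpha})L$, so that $N=q^{\alpha}M=\tfrac12\,q^{\alpha}\sigma(q^{\alpha})L$. Since $\sigma(q^{\alpha})<2q^{\alpha}$, this gives $N<q^{2\alpha}L<Kq^{2\alpha}$; as $K$ is fixed it therefore suffices to bound $q^{\alpha}$ (equivalently $M$) effectively in terms of $K$.

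The engine is that $\sigma(M)=q^{\alpha}L$ is highly smooth: its only prime divisors are $q$ together with the boundedly many primes dividing $L$, all of which are $<K$. Recall Euler's classical form $N=q^{\alpha}m^{2}$, so that $M=m^{2}$ and every exponent $\beta_{p}$ with $p^{\beta_{p}}\|M$ is even, hence $\geq 2$. By multiplicativity each factor $\sigma(p^{\beta_{p}})=1+p+\cdots+p^{\beta_{p}}$ is then composed only of $q$ and of primes dividing $L$; writing $\sigma(p^{\beta_{p}})=q^{c_{p}}d_{p}$ with $\gcd(d_{p},q)=1$, the prime-to-$q$ parts satisfy $\prod_{p\mid M}d_{p}=L<K$. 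Consequently $d_{p}=1$ for all but at most $\log_{2}K$ of the primes $p\mid M$, and every such prime yields the exponential Diophantine equation $\tfrac{p^{\beta_{p}+1}-1}{p-1}=q^{c_{p}}$ with $\beta_{p}\geq 2$, while the few exceptional primes satisfy the slightly more general $\tfrac{p^{\beta_{p}+1}-1}{p-1}=q^{c_{p}}d_{p}$ with $1<d_{p}<K$.

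Finally I would bound the solutions of this family. When $c_{p}\geq 2$ the value $q^{c_p}$ is a perfect power and effective versions of the Nagell--Ljunggren theorem apply; in general each equation is an $S$-unit relation with $S=\{q\}\cup\{\ell:\ell\mid L\}$, to which Baker's theory of linear forms in logarithms applies, bounding $p$, $\beta_{p}$ and $c_{p}$. Once all prime factors of $M$ and their exponents are bounded, $M$ is bounded, hence so are $\sigma(M)$ and $q^{\alpha}=\sigma(M)/L$, and therefore $N$. The main obstacle is that the Baker bounds a priori depend on $q$, which is not yet controlled: the borderline case $c_{p}=1$, where $\sigma(p^{\beta_{p}})=q$ is simply prime, shows that no single equation can bound $q$ (for instance $q=p^{2}+p+1$ is solvable for arbitrarily large $p$). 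The delicate point is thus to bound $q$ itself, which forces one to exploit the whole collection of equations simultaneously together with the pinch $2-2/q<\sigma(M)/M<2$ coming from the identities, and to render the resulting estimate effective and uniform in $K$; the remainder is routine bookkeeping with the identities above.
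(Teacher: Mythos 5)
Your proposal correctly reduces the theorem to bounding $q^{\alpha}$, but the two steps on which everything hinges are both left open, and one of them cannot be carried out as stated. First, the bounding of $q$: you candidly flag it as ``the delicate point'' and defer it to exploiting ``the whole collection of equations simultaneously'' together with the pinch $2-2/q<\sigma(M)/M<2$, but that pinch carries essentially no information about the size of $q$, and no argument is given. This deferred step is precisely the technical heart of the paper's proof: writing $\sigma(q^{\alpha})=(2M/m)\,p_1^{\lambda_1}\cdots p_k^{\lambda_k}$ (in the paper's notation, where $m=m_1\cdots m_k<K$ collects the prime-to-$q$ cofactors and $M$ is the product of the prime powers whose divisor sum is a pure $q$-power), one first bounds $\Lambda=\lcm[\lambda_i+1]\le m^2\le K^2$ via Bang's theorem (Lemma 1), then raises this identity to the power $\Lambda$ and reduces modulo $q^{\beta_i}$ to obtain the dichotomy of Lemma 3: either $q\mid m$, whence $q\le m<K$, or $q^{\beta_i}<(2Mqm)^{(m^2+1)^{i}}$ for all $i$; the latter, played against the lower bound $\alpha+1\ge M^{1/2}$ of Lemma 2, forces $M$, hence $s$, to be bounded. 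Nothing in your sketch substitutes for this mechanism, and calling the remainder ``routine bookkeeping'' does not close the gap.

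Second, even granting $q\le K$, your endgame --- bound every prime factor of $N/q^{\alpha}$ and its exponent, hence $N/q^{\alpha}$, hence $q^{\alpha}=\sigma(N/q^{\alpha})/L$ --- breaks down. Your appeal to Euler's form to conclude that all exponents in $N/q^{\alpha}$ are even (so $\ge 2$) is unjustified: the hypothesis concerns an arbitrary unitary prime power $q^{\alpha}\| N$, not necessarily the special prime, so $N/q^{\alpha}$ may contain the Euler prime $\pi$ to exponent exactly $1$. For that prime the equation reads $\pi+1=q^{c}d$, and since $X+1$ has a single root, neither the effective $P(f(n))\to\infty$ theorem nor Nagell--Ljunggren applies; $\pi$ is genuinely not bounded by any such local analysis even with $q$ and $d$ bounded, since $c$ may be large. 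The paper sidesteps this trap structurally: it never bounds $q$, $\alpha$, or the primes attached to composite cofactors; it bounds only the primes with $\sigma(p_i^{\lambda_i})=q^{\beta_i}$ (whose exponents $\lambda_i$ are even, so $\ge 2$, making the relevant polynomial have at least two distinct roots), concludes that the \emph{number} $s$ of distinct prime factors is bounded, and then invokes Heath--Brown's bound $N<4^{4^{s+1}}$ --- absent from your proposal --- to pass from ``boundedly many prime factors'' to ``bounded $N$''. Some ingredient of this kind appears indispensable on your route as well, precisely because individual primes such as $\pi$ cannot be bounded equation by equation.
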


The proof of Theorem \ref{thm:1} is elementary. The proof of Theorem \ref{thm:2} uses the arguments from the proof of the  particular case Theorem \ref{thm:1}
together with two more ingredients. The first ingredient is a result of Heath--Brown \cite{HB} to the effect that an odd perfect number $N$ with $s$ distinct prime factors 
cannot exceed $4^{4^{s+1}}$. The second ingredient is a well-known result from the theory of Exponential Diophantine Equations (for the main results in this area, see \cite{ST}) regarding the largest prime factor of $f(n)$ for large $n$, where $f(X)\in \Z[X]$ is a polynomial with at least two distinct roots. 
 
One of the main tools for the proof of both Theorems \ref{thm:1} and \ref{thm:2} is the following result due to Bang (see \cite{Ba}).

\begin{lemma}
\label{lem:Bang}
Let $a>1$ be an integer. For all $n\ge 7$, there is a prime factor $p$ of $a^n-1$ which does not divide $a^m-1$ for any $1\le m<n$. Moreover, such a prime $p$ is congruent to $1$ modulo $n$.
\end{lemma}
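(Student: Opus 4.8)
The plan is to reduce everything to a single cyclotomic factor. Writing $a^n-1=\prod_{d\mid n}\Phi_d(a)$ with $\Phi_d$ the $d$-th cyclotomic polynomial, I first record the dictionary between primitive divisors and orders: a prime $p\mid a^n-1$ divides no $a^m-1$ with $1\le m<n$ exactly when the multiplicative order $\ord_p(a)$ equals $n$. Since $\ord_p(a)\mid p-1$, any such $p$ satisfies $p\equiv 1\pmod n$ automatically, so the ``moreover'' clause is free once existence is secured. The task is thus to exhibit a prime $p$ with $\ord_p(a)=n$.

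Such primes live inside $\Phi_n(a)$. The structural step is: if $p\mid\Phi_n(a)$ and $d=\ord_p(a)<n$, then $n/d$ is a power of $p$, so in particular $p\mid n$; moreover, since $d\mid p-1$ gives $d<p$ and one checks that $d$ equals the prime-to-$p$ part of $n$, the prime $p$ must be the \emph{largest} prime factor $P$ of $n$. A lifting-the-exponent computation then shows that this lone exceptional prime divides to the first power only, i.e. $P\,\|\,\Phi_n(a)$. Hence $\Phi_n(a)=P^{\varepsilon}\cdot(\text{a product of primes } \equiv 1 \bmod n)$ with $\varepsilon\in\{0,1\}$, and a primitive divisor exists precisely when $\Phi_n(a)>P$.

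It remains to prove $\Phi_n(a)>P$ for all $n\ge7$ and $a\ge2$; as $P\le n$ it is enough to show $\Phi_n(a)>n$. Factoring over the primitive $n$-th roots of unity gives $\Phi_n(a)=\prod_{\zeta}(a-\zeta)$ and hence $\Phi_n(a)\ge(a-1)^{\phi(n)}$. For $a\ge3$ this yields $\Phi_n(a)\ge 2^{\phi(n)}>n$ for every $n\ge7$, a routine comparison of $\phi(n)$ with $\log_2 n$.

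The one genuinely delicate case is $a=2$, where $(a-1)^{\phi(n)}$ collapses to $1$ and gives nothing. This is no accident: $a=2$, $n=6$ is a true exception, as $2^6-1=63=3^2\cdot7$ has no primitive divisor, and this is exactly why the hypothesis $n\ge7$ is present. For $a=2$ I would instead estimate $\prod_{\zeta}|2-\zeta|$ directly, pairing $\zeta$ with $\bar\zeta$ into real factors $5-4\cos\theta$; only the small proportion of roots with argument near $0$ give factors close to $1$, while the bulk exceed $2$, and the geometric mean of the factors tends to the Mahler measure $2$ of $x-2$. This produces a lower bound $\Phi_n(2)=2^{(1+o(1))\phi(n)}$, which beats $n$ for all sufficiently large $n$, the finitely many remaining small $n\ge7$ being checked by hand. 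Pinning down this $a=2$ estimate and the first-power claim $P\,\|\,\Phi_n(a)$ are the two steps demanding real care; everything else is bookkeeping.
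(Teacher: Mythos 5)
The paper does not prove this lemma at all: it is quoted as a known result of Bang, with a citation to his 1886 paper \cite{Ba}, so there is no internal proof to compare against. Your proposal is the standard modern proof of Bang's theorem via cyclotomic polynomials, and its skeleton is correct: the dictionary between primitive divisors of $a^n-1$ and primes with $\ord_p(a)=n$ (whence $p\equiv 1\pmod n$ for free), the structural fact that a non-primitive prime divisor $p$ of $\Phi_n(a)$ satisfies $n=\ord_p(a)\cdot p^k$ and is therefore the largest prime factor $P$ of $n$, the first-power claim $P\,\|\,\Phi_n(a)$, and the reduction of existence to the size inequality $\Phi_n(a)>P$, settled by $\Phi_n(a)\ge (a-1)^{\phi(n)}$ when $a\ge 3$. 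This buys something the paper does not attempt (self-containedness), at the cost of the two delicate steps you yourself flag.

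Two points need tightening before the sketch is a proof. First, the lifting-the-exponent step is stated for a generic exceptional prime, but the usual LTE identity $v_p(a^n-1)=v_p(a^d-1)+v_p(n/d)$ requires $p$ odd; when $P=2$ you are forced into $d=1$, $n=2^k\ge 8$, and you should argue separately that $\Phi_n(a)=a^{n/2}+1\equiv 2\pmod 8$ (here $n/2$ is even, so $a^{n/2}$ is an odd square), which gives $2\,\|\,\Phi_n(a)$ directly. Second, your $a=2$ estimate as stated --- the geometric mean of the factors $5-4\cos\theta$ tending to the Mahler measure --- is asymptotic with an ineffective $o(1)$, which clashes with your plan to ``check the finitely many remaining small $n$ by hand'': you cannot enumerate those $n$ without an explicit bound. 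The standard repair is the M\"obius factorization $\Phi_n(2)=\prod_{d\mid n}(2^d-1)^{\mu(n/d)}$, which yields $\Phi_n(2)>2^{\phi(n)}\prod_{k\ge 1}(1-2^{-k})>2^{\phi(n)}/4$; this exceeds $n$ as soon as $\phi(n)\ge \log_2 n+2$, leaving an explicit finite list of exceptional $n$ (such as $n=8,12$, where $\Phi_8(2)=17$ and $\Phi_{12}(2)=13$ anyway exceed $n$) to verify directly. With those two repairs, both of which are routine and consistent with your own assessment of where the care is needed, the argument is complete.
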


Throughout the paper, we use $p,~q,~r$, $P$ and $Q$ with or without subscripts for prime numbers. 

\section{Preliminaries}

Put $N=p_1^{\lambda_1}\cdots p_s^{\lambda_s}q^{\alpha}$, where the primes $p_1,~\ldots,~p_s,~q$ are distinct and not necessarily ordered increasingly. We write 
\begin{equation}
\label{eq:1}
\sigma(p_i^{\lambda_i})=m_i q^{\beta_i},\quad i=1,\ldots,k,\quad {\text{\rm and}}\qquad \sigma(p_i^{\lambda_i})=q^{\beta_i},\qquad i=k+1,\ldots,s,
\end{equation}
where $m_i\ge 2$ for $i=1,\ldots,k$, and $\alpha=\beta_1+\cdots+\beta_s$. For both proofs of Theorem \ref{thm:1} and \ref{thm:2} we will need 
facts about equations \eqref{eq:1} for some $i$ either in $\{1,\ldots,k\}$ and with a fixed value of $m_i\ge 2$, or with $i\in \{k+1,\ldots,s\}$. Observe that $\lambda_i$ is 
even for all $i=k+1,\ldots,s$, and $\lambda_i$ is even for at most one $i\in \{1,\ldots,k\}$.

We treat first the case of a fixed $m_i\ge 2$. For simplicity, put $p:=p_i$, $\beta:=\beta_i$, $m:=m_i$, and $\lambda:=\lambda_i$ for some $i=1,\ldots,k$. Then the first equation \eqref{eq:1} for the index $i$ is
\begin{equation}
\label{eq:*}
\frac{p^{\lambda+1}-1}{p-1}=mq^{\beta}.
\end{equation}
Here, $p$ and $q$ are odd. We prove the following lemma.

\begin{lemma}
\label{lem:1}
In equation \eqref{eq:*}, we have $\lambda+1\le m^2$.
\end{lemma}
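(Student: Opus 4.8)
The plan is to set $n=\lambda+1$, so that \eqref{eq:*} reads $\dfrac{p^{n}-1}{p-1}=\prod_{d\mid n,\ d>1}\Phi_d(p)=mq^{\beta}$, and to prove the equivalent inequality $m\ge \sqrt{n}$, which gives $\lambda+1=n\le m^{2}$. The whole point is to extract from the factorisation of $\dfrac{p^{n}-1}{p-1}$ a large prime factor that is forced to divide $m$ rather than $q$.

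First I would record the order bookkeeping. Let $e=\ord_q(p)$; since $q\mid p^{n}-1$ we have $e\mid n$. For a divisor $d\mid n$ with $d>1$, any prime $P$ with $\ord_P(p)=d$ divides $p^{d}-1\mid p^{n}-1$ but not $p-1$, hence $P\mid \dfrac{p^{n}-1}{p-1}=mq^{\beta}$; moreover such a $P$ equals $q$ only when $d=e$, since $P=q$ would force $d=\ord_P(p)=\ord_q(p)=e$. Thus for every divisor $d\mid n$ with $d>1$ and $d\ne e$, a primitive prime of $p^{d}-1$ (one exists by Lemma~\ref{lem:Bang} as soon as $d\ge 7$) must divide $m$, and by the congruence in Lemma~\ref{lem:Bang} it is $\equiv 1\pmod d$, hence $\ge d+1$; therefore $m\ge d+1$.

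Now I would choose $d$ as large as possible subject to $d\ne e$. If $e\ne n$, take $d=n$: provided $n\ge 7$ this yields $m\ge n+1>\sqrt{n}$. If $e=n$ and $n$ is composite, take $d$ to be the largest proper divisor $n/\ell$, where $\ell$ is the least prime factor of $n$; since $\ell\le\sqrt{n}$ we have $d=n/\ell\ge\sqrt{n}$, and (when $d\ge 7$) Lemma~\ref{lem:Bang} gives $m\ge d+1>\sqrt{n}$. The one configuration left by this dichotomy is $e=n$ with $n$ prime; there $\dfrac{p^{n}-1}{p-1}=\Phi_n(p)$, and every prime factor of $\Phi_n(p)$ is either $\equiv 1\pmod n$ (a primitive prime, hence $\ge n+1$) or equal to $n$ itself. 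Since $m\ge 2$ has at least one prime factor, that factor is $\ge n$, so $m\ge n\ge\sqrt{n}$, which is even stronger than required.

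The remaining obstacle is purely the low-exponent bookkeeping: Lemma~\ref{lem:Bang} is available only for $d\ge 7$, so the finitely many $n$ for which the divisor $d$ selected above is $<7$ must be treated by hand. I expect these to be dispatched elementarily: in the case $e=n$ one still has $\Phi_{d}(p)\mid m$ for the chosen proper divisor $d$, and the trivial size bound $\Phi_{d}(p)\ge (p-1)^{\phi(d)}\ge 2^{\phi(d)}$ already forces $m^{2}\ge n$ after a short computation, while the few genuinely small values of $n$ (where even $m^{2}\ge 4$ or $m^{2}\ge 9$ settles the inequality outright) are checked directly from \eqref{eq:*}. The only delicate point is isolating exactly when the top primitive prime can coincide with $q$, namely the case $e=\ord_q(p)=n$, which is handled by descending to the largest proper divisor; the large-$n$ argument is then a direct application of Bang's lemma.
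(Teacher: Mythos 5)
Your proposal is correct, but it takes a genuinely different route from the paper's. The paper argues by contradiction: assuming $\lambda+1>m^2$, it introduces the index $z_p(m)\le m$ dividing $\lambda+1$, writes $\lambda+1=z_p(m)d$ with $d>z_p(m)$, and applies Bang's lemma twice --- once to $u_d$ to force the fresh primitive prime to be $q$, and once to $u_{\lambda+1}$ to reach a contradiction when $d\ge 7$ --- while the residual range $d\le 6$ is eliminated by forcing $4\mid \lambda+1$, hence $u_4\mid u_{\lambda+1}$ and $4\mid mq^{\beta}$, which contradicts the fact that $mq^\beta=\sigma(p^{\lambda})$ divides $2N$ with $N$ odd. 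You instead prove the lower bound $m\ge\sqrt{n}$ directly, by selecting a divisor $d\mid n$ with $d\ne e=\ord_q(p)$ and $d\ge\sqrt{n}$, so that Bang's primitive prime of $p^d-1$ has order exactly $d$, is therefore distinct from $q$, divides $m$, and is $\ge d+1$; the leftover configuration $e=n$ with $n$ prime is settled by the standard structure of prime divisors of $\Phi_n(p)$ (each is $\equiv 1\pmod n$ or equal to $n$), giving the stronger conclusion $m\ge n$. Your approach buys two things: it never invokes the odd-perfect ambient hypothesis (the paper's ``multiple of $4$'' step does, via $4\nmid\sigma(p^{\lambda})$), so it proves the lemma for the bare equation \eqref{eq:*} with odd $p,q$; and in most branches it yields bounds stronger than $m^2\ge n$. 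The price is the low-exponent bookkeeping, which you only sketch but which does go through as you predict: for $\beta=0$ the claim is trivial since $m=u_n\ge 3^{n-1}$; for $e\ne n$ and $n\le 6$, the bound $m\ge 2$ handles $n\le 4$, parity of $u_n$ forces $m\ge 3$ when $n=5$, and $n=6$ with $m=2$ dies because $p^2+p+1$ and $p^2-p+1$ are coprime and both exceed $1$, so they cannot both be powers of the single prime $q$; and in the branch $e=n$ composite with $d=n/\ell\le 6$ one has $n=\ell d\le 30$ and your divisibility $\Phi_d(p)\mid m$ together with $\Phi_d(p)\ge(p-1)^{\phi(d)}\ge 2^{\phi(d)}$ checks all such $n$ (e.g.\ $d=2$ occurs only for $n=4$, where $m\ge p+1\ge 4$). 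With those verifications written out, your argument is complete; the paper's $z_p(m)$ bookkeeping avoids this enumeration and the cyclotomic machinery, at the cost of being tied to the odd-perfect context.
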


\begin{proof}
For a positive integer $n$ coprime to $p$ let $\ell_p(n)$ be the multiplicative order of $p$ modulo $n$. Let $u_n:=(p^n-1)/(p-1)$. Then $m$ divides $u_{\lambda+1}$. 
It is then well-known that $\lambda+1$ is divisible by the number $z_p(m)$ defined as
$$
z_p(m):={\text{\rm lcm}}[z_p(r^{\delta}),~r^{\delta}\| m],
$$
where 
$$
z_p(r^{\delta})=\left\{\begin{matrix} r^{\delta} & {\text{\rm if}} ~ p\equiv 1\pmod r,\\
\ell_p(r^{\delta}) & {\text{\rm otherwise}}.\\
\end{matrix} \right.
$$ 
Clearly, $2\le z_p(m)\le m$. Equality is achieved if and only if each prime factor of $m$ is also a prime factor of $p-1$. Assume that $\lambda+1>m^2$. Write $\lambda+1=z_p(m) d$, where $d>z_p(m)$.  We look at $u_d=(p^d-1)/(p-1)$ which is  a divisor of $mq^{\beta}$. If $d\ge 7$, then  by Lemma \ref{lem:Bang} there is a prime factor $P$ of $u_d$ which does not divide $u_{z_p(m)}$. Since all prime factors of $m$ divide $u_{z_p(m)}$, we must have that $P=q$. But then all prime factors of $u_{\lambda+1}$ divide either $q$; hence, $u_d$, or $m$; hence $u_{z_p(m)}$, contradicting Lemma \ref{lem:Bang}. So, we have a contradiction if $d\ge 7$. 

Thus, all prime factors of $u_d$ are among the prime factors of $u_{z_p(m)}$ and so $d\le 6$. In particular,  the prime factors of $d$ must be among the prime factors of  $z_p(m)$, for otherwise, namely if there 
is some prime factor $Q$ of $d$ which does not divide $z_p(m)$, then $u_Q=(p^Q-1)/(p-1)$ is a divisor of $u_d=(p^d-1)/(p-1)$ which is coprime to $u_{z_p(m)}$, a multiple of $m$, which in turn is false. Thus, all prime factors of $d$ are indeed among the prime factors of $z_p(m)$, and since $d>z_p(m)$, there is a prime 
factor $Q$ of $d$ which appears in the factorization of $d$ with an exponent larger than the exponent with which it appears in the factorization 
of $z_p(m)$. Hence, $d\ge Q^2$, and since $d\le 6$, we get that $Q=2$. This implies that $u_{\lambda+1}$ is a multiple of $u_4$; hence, a multiple of $4$, which is false. This shows that indeed $\lambda+1\le m^2$. 
\end{proof}

We next treat the case of $i\in \{k+1,\ldots,s\}$.

\begin{lemma}
\label{lem:2}
The equations
\begin{equation}
\label{eq:lemm2}
\frac{p^{\lambda+1}-1}{p-1}=q^{\beta}\qquad {\text{and}}\qquad p^{\lambda}\mid \frac{q^{\alpha+1}-1}{q-1}
\end{equation}
imply that $\alpha+1$ is a multiple of $p^{\lambda-1}$. 
\end{lemma}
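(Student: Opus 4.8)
The plan is to compare the two equations of \eqref{eq:lemm2} through the multiplicative order of $q$ modulo $p$ and the $p$-adic valuation, which I denote by $v_p$. Let $\ell$ be the multiplicative order of $q$ modulo $p$ and set $w:=v_p(q^{\ell}-1)\ge 1$. The first equation will be used to pin down the \emph{exact} value $w=1$, while the second equation, together with the standard Lifting-the-Exponent computation of $v_p(q^{n}-1)$, will convert the divisibility $p^{\lambda}\mid\sigma(q^{\alpha})$ into the desired bound $v_p(\alpha+1)\ge\lambda-1$. We may assume $\lambda\ge 1$, since otherwise there is nothing to prove.

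First I would extract information from the first equation $\sigma(p^{\lambda})=1+p+\cdots+p^{\lambda}=q^{\beta}$. Reducing modulo $p^{2}$ and using $\lambda\ge 1$ gives $q^{\beta}\equiv 1+p\pmod{p^{2}}$; in particular $q^{\beta}\equiv 1\pmod p$, so $\ell\mid\beta$, and at the same time $q^{\beta}\not\equiv 1\pmod{p^{2}}$. If we had $w\ge 2$, that is $q^{\ell}\equiv 1\pmod{p^{2}}$, then raising to the power $\beta/\ell$ would give $q^{\beta}\equiv 1\pmod{p^{2}}$, a contradiction. Hence $w=1$, i.e. $v_p(q^{\ell}-1)=1$.

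Next I would feed this into the second equation $p^{\lambda}\mid\sigma(q^{\alpha})=(q^{\alpha+1}-1)/(q-1)$, so that $v_p\big((q^{\alpha+1}-1)/(q-1)\big)\ge\lambda$. Recall the valuation formula: for odd $p$ coprime to $q$ one has $v_p(q^{n}-1)=0$ when $\ell\nmid n$, and $v_p(q^{n}-1)=w+v_p(n)$ when $\ell\mid n$ (here $p\nmid\ell$, since $\ell\mid p-1$). I would then split into two cases. If $\ell=1$, then $v_p(q-1)=w=1$ and $v_p\big((q^{\alpha+1}-1)/(q-1)\big)=v_p(\alpha+1)\ge\lambda$, giving even $p^{\lambda}\mid\alpha+1$. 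If $\ell>1$, then $v_p(q-1)=0$, positivity of the valuation forces $\ell\mid\alpha+1$, and then $v_p\big((q^{\alpha+1}-1)/(q-1)\big)=w+v_p(\alpha+1)=1+v_p(\alpha+1)\ge\lambda$, whence $v_p(\alpha+1)\ge\lambda-1$. In either case $p^{\lambda-1}\mid\alpha+1$, as claimed.

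The computational heart is routine, so the one genuinely load-bearing step is the determination $w=v_p(q^{\ell}-1)=1$ coming from the first equation: it is precisely this \emph{exact} (rather than merely lower) bound on the valuation that prevents any further loss and yields $p^{\lambda-1}$ rather than a weaker exponent. The only remaining points needing care are the harmless reduction to $\lambda\ge 1$ and the observation that $p\nmid\ell$, which guarantees $v_p(\alpha+1)=v_p\big((\alpha+1)/\ell\big)$ in the case $\ell>1$.
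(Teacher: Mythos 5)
Your proof is correct and takes essentially the same route as the paper's: the paper also reads off $p\,\|\,q^{\beta}-1$ from $\sigma(p^{\lambda})=q^{\beta}$, deduces $p\,\|\,q^{\ell}-1$ for $\ell$ the multiplicative order of $q$ modulo $p$ (your determination $w=1$), and then obtains the conclusion from $p^{\lambda}\mid(q^{\alpha+1}-1)/(q-1)$ via the standard lifting-the-exponent valuation. Your write-up simply makes explicit the case split $\ell=1$ versus $\ell>1$ that the paper compresses into ``the conclusion follows immediately.''
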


\begin{proof}
The left equation in \eqref{eq:lemm2} is
$$
p^{\lambda}+\cdots+p^2+p=q^{\beta}-1,
$$
showing that $p\| q^{\beta}-1$. This implies easily that $p\| q^{\ell_{q}(p)}-1$. Now the conclusion follows immediately from the divisibility relation from the right hand side of equation \eqref{eq:lemm2}.
\end{proof}

Let $m:=m_1\cdots m_k$. We put $M:=\prod_{i=k+1}^s p_i^{\lambda_i}$. We label the numbers $\lambda_1,\ldots,\lambda_k$ such that $\beta_1\le \beta_2\le \cdots \le \beta_k$. Applying Lemma \ref{lem:2} for $i=k+1,\ldots,s$, we get that
\begin{equation}
\label{eq:alpha}
\alpha+1\ge \prod_{i=2}^s p_i^{\alpha_i-1}\ge \prod_{i=2}^s p_i^{\alpha_i/2}=M^{1/2}.
\end{equation}
Let $\Lambda:={\text{\rm lcm}}[\lambda_i+1:i=1,\ldots,k]$. Then $\Lambda\le \prod_{i=1}^k m_i^2=m^2$. Observe that $p_i^{\lambda_i}\equiv 1\pmod {q^{\beta_i}}$ for $i=1,\ldots,k$. In particular,
\begin{equation}
\label{eq:Lambda}
p_i^{\Lambda}\equiv 1\pmod {q^{\beta_i}}\qquad {\text{\rm for~all}}\quad i=1,\ldots,k.
\end{equation}

\begin{lemma}
\label{lem:3}
One of the following holds:
\begin{itemize}
\item[(i)] $q\mid m$;
\item[(ii)] $q^{\beta_i}<(2Mqm)^{(m^2+1)^{i}}$ for $i=1,\ldots,k$.
\end{itemize}

\begin{proof}
If $q\mid m$, we are through. So, suppose that $q\nmid m$. We write
$$
\frac{q^{\alpha+1}-1}{q-1}=\sigma(q^{\alpha})=\left(\frac{2M}{m}\right) p_1^{\lambda_1}\cdots p_k^{\lambda_k}.
$$
We raise the above equation to the power $\Lambda$ and use congruences \eqref{eq:Lambda} getting 
$$
\left(\frac{-1}{q-1}\right)^{\Lambda}\equiv \left(\frac{2M}{m}\right)^{\Lambda} \pmod {q^{\beta_1}}.
$$
Hence, $q^{\beta_1}\mid (2M(q-1))^{\Lambda}\pm m^{\Lambda}$. The last expression is nonzero, since if it were zero, we would get $2M(q-1)=m$, which is impossible because $2M(q-1)$ is a multiple of $4$, whereas $m$ is a divisor of $2N$, so it is not a multiple of $4$.  Thus, 
$$
q^{\beta_1}\le (2M(q-1))^{\Lambda}+m^{\Lambda}<(2M(q-1)+m)^{\Lambda}<
(2Mqm)^{\Lambda}<(2Mqm)^{m^2}.
$$ 
This takes care of the case $i=1$ of (ii). For the case of a general $i$ in (ii), suppose, by induction, that $i\ge 2$ and that we have proved that  
$$
q_j^{\beta_j}<(2Mqm)^{(m^2+1)^{j} }\quad {\text{\rm holds~for~all}}\quad  j=1,\ldots,i-1.
$$
Then $p_j^{\lambda_j}<\sigma(p_j^{\lambda_j})=m_j q^{\beta_j}$ for $j=1,\ldots,i-1$. Thus,
\begin{equation}
\label{eq:inter}
\frac{q^{\alpha+1}-1}{q-1}=\left(\frac{2Mp_1^{\lambda_1}\cdots p_{i-1}^{\lambda_{i-1}}}{m}\right) p_i^{\lambda_i}\cdots p_k^{\lambda_k}.
\end{equation}
We raise  congruence \eqref{eq:inter} to the power $\Lambda$ and reduce it modulo $q^{\beta_i}$ obtaining 
$$
q_i^{\beta_i}\mid (2M(q-1)p_1^{\lambda_1}\cdots p_{i-1}^{\lambda_{i-1}})^{\Lambda}\pm m^{\Lambda}.
$$
The last expression above is not zero since $2M(q-1)p_1^{\lambda_1}\cdots p_{i-1}^{\lambda_{i-1}}$ is a multiple of $4$ and $m$ is not. Hence,
\begin{eqnarray*}
q^{\beta_i} & \le & (2M(q-1)p_1^{\lambda_1}\cdots {p_{i-1}}^{\lambda_{i-1}})^{\Lambda}+m^{\Lambda}\\
& \le & m^{\Lambda}((2M(q-1)^{\Lambda}+1)(q^{\beta_1}\cdots {q_{i-1}}^{\beta_{i-1}})^{\Lambda} \\
& \le &  (2qMm)^{\Lambda} q^{\Lambda (\beta_1+\cdots+\beta_{i-1} ) }\\
& < & (2qMm)^{m^2(1+(m^2+1)+\cdots+(m^2+1)^{i-1})}\\
& < & (2qMm)^{(m^2+1)^i},
\end{eqnarray*}
which is what we wanted to prove.
\end{proof}
\end{lemma}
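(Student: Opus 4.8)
The plan is to use the perfectness of $N$ to compress everything into a single exponential equation for $q$, and then to bound the exponents $\beta_1,\dots,\beta_k$ one at a time by a $q$-adic congruence argument run as an induction on $i$. First I would record the master identity. Writing $\sigma(N)=2N$ in two ways---once as $\sigma(q^\alpha)\prod_i\sigma(p_i^{\lambda_i})=\sigma(q^\alpha)\,m\,q^{\beta_1+\cdots+\beta_s}=\sigma(q^\alpha)\,m\,q^\alpha$, and once as $2q^\alpha M\,p_1^{\lambda_1}\cdots p_k^{\lambda_k}$---and cancelling the common factor $q^\alpha$ gives
\begin{equation*}
\frac{q^{\alpha+1}-1}{q-1}=\sigma(q^\alpha)=\frac{2M}{m}\,p_1^{\lambda_1}\cdots p_k^{\lambda_k}.
\end{equation*}
From here on I assume that alternative (i) fails, that is $q\nmid m$, and work toward the bounds in (ii).

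The engine of the proof is a reduction modulo a power of $q$. For the base case $i=1$, I would raise the master identity to the power $\Lambda$ and reduce modulo $q^{\beta_1}$. The right-hand side collapses: congruence \eqref{eq:Lambda} gives $p_i^{\Lambda}\equiv1\pmod{q^{\beta_i}}$, and because $\beta_1\le\beta_i$ the same holds modulo the smaller modulus $q^{\beta_1}$, so every factor $p_i^{\lambda_i\Lambda}$ becomes $1$. Simultaneously $q^{\alpha+1}\equiv0\pmod{q^{\beta_1}}$ (valid since $\alpha\ge\beta_1$) turns the left-hand side into the residue of $\bigl(-1/(q-1)\bigr)^{\Lambda}$. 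Multiplying through by $(q-1)^\Lambda m^\Lambda$, both coprime to $q$, I reach a divisibility
\begin{equation*}
q^{\beta_1}\mid \bigl(2M(q-1)\bigr)^{\Lambda}\pm m^{\Lambda}.
\end{equation*}

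The delicate point---and the hinge of the whole method---is checking that this integer is \emph{nonzero}, since only then does divisibility upgrade to the inequality $q^{\beta_1}\le\bigl(2M(q-1)\bigr)^{\Lambda}+m^{\Lambda}$. Vanishing would force $\bigl(2M(q-1)\bigr)^{\Lambda}=m^{\Lambda}$ and hence, on taking positive $\Lambda$-th roots, $2M(q-1)=m$; but $2M(q-1)$ is a multiple of $4$ because $q$ is odd and $q-1$ is even, whereas $m$ is a divisor of $2N$ with $N$ odd and so is never a multiple of $4$. With nonvanishing in hand, the bound $\Lambda\le m^2$ and the crude estimate $2M(q-1)+m<2Mqm$ give $q^{\beta_1}<(2Mqm)^{m^2}<(2Mqm)^{m^2+1}$, settling $i=1$.

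For the inductive step I would shift $p_1^{\lambda_1}\cdots p_{i-1}^{\lambda_{i-1}}$ to the left of the master identity, rerun the identical reduction modulo $q^{\beta_i}$ (now the factors with index $j\ge i$ vanish because $\beta_i\le\beta_j$), and obtain $q^{\beta_i}\le\bigl(2M(q-1)p_1^{\lambda_1}\cdots p_{i-1}^{\lambda_{i-1}}\bigr)^{\Lambda}+m^{\Lambda}$, nonvanishing again by the divisibility-by-$4$ argument. What remains is careful bookkeeping rather than a new idea: I replace each $p_j^{\lambda_j}$ by $m_jq^{\beta_j}=\sigma(p_j^{\lambda_j})$, substitute the inductive bounds $q^{\beta_j}<(2Mqm)^{(m^2+1)^j}$ for $j<i$, and fold every stray constant into the single base $2Mqm$. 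Summing the geometric series of exponents, $\sum_{j=0}^{i-1}(m^2+1)^j=\bigl((m^2+1)^i-1\bigr)/m^2$, and using $\Lambda\le m^2$, the total exponent telescopes to at most $(m^2+1)^i-1<(m^2+1)^i$, which closes the induction. I expect the only real risk to lie in this final accounting step---keeping the powers of $m^2+1$ aligned so that $\Lambda\le m^2$ exactly cancels the denominator of the geometric sum---rather than in any deep arithmetic.
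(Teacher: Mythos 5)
Your proposal is correct and follows essentially the same route as the paper's own proof: the same master identity $\sigma(q^{\alpha})=(2M/m)\,p_1^{\lambda_1}\cdots p_k^{\lambda_k}$, the same raising to the power $\Lambda$ with reduction modulo $q^{\beta_i}$ (using the ordering $\beta_1\le\cdots\le\beta_k$), the same nonvanishing check via divisibility by $4$, and the same geometric-series accounting with $\Lambda\le m^2$ cancelling the denominator to give the exponent $(m^2+1)^i-1<(m^2+1)^i$. There is nothing to correct.
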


\section{The proof of Theorem \ref{thm:1}}

Since $m\le 5$ and at most one of the $m_i$s is even for $i=1,\ldots,k$, we get that $k\le 1$. Then Lemma \ref{lem:3} shows that either $q\le 5$, or
\begin{equation}
\label{eq:10}
q^{\beta_i}\le (20Mq)^{25^i}\qquad {\text{\rm for~all}}~i=1,\ldots,k.
\end{equation}
Assume that $q>5$. Then
\begin{eqnarray*}
q^{{\sqrt{M}}-1} & \le & q^{\alpha}=\left(\frac{2M}{m}\right) p_1^{\lambda_1}\cdots p_k^{\lambda_k}<2Mq^{\beta_1+\cdots +\beta_k}\\
& < & 2M (20qM)^{25}<
(20qM)^{26},
\end{eqnarray*}
leading to
$$
3^{{\sqrt{M}}-27}\le q^{{\sqrt{M}}-27}<(20M)^{26},
$$
which implies that $M<2\times 10^5$. Since $s\ge 8$ and $k\le 1$, there exists $j\in \{k+1,\ldots,s\}$ 
such that $p_j^{\beta_j}<M^{1/7}\le 6$. This is false because $p_j^{\beta_j}\ge 3^2=9$. Thus, $q\in \{3,5\}$. 

The equation from the right hand side of \eqref{eq:1} 
with $p:=p_2$, $\lambda:=\lambda_2$ and $\beta:=\beta_2$ becomes 
$$
\frac{p^{\lambda+1}-1}{p-1}=q^{\beta}.
$$
Observe that $\lambda+1$ is odd. If $\lambda+1\ge 7$, we get a contradiction from Lemma \ref{lem:Bang} because $q\le 5$. Thus, 
$\lambda\in \{2,4\}$ and we get one of the four equations
\begin{eqnarray*}
p^2+p+1 & = & 3^{\beta},\qquad p^4+p^3+p^2+p+1=3^{\beta},\\
p^2+p+1 & = & 5^{\beta},\qquad p^4+p^3+p^2+p+1=25^{\beta}.
\end{eqnarray*}
Arguments modulo $9$ and $25$ show first that the exponents $\beta$ from the above equations are in $\{0,1\}$, which immediately implies that none of the above equations has in fact any solutions.

\section{The Proof of Theorem \ref{thm:2}}

Here, we have $m=m_1\cdots m_k\le K$, so $m\le K$ and $k\le (\log K)/(\log 2)$. Heath--Brown proved that $N<4^{4^{s+1}}$ (see \cite{HB}). Hence, we may assume that $s>k$.  Then $M\ge 2^{s-k}$. Now the argument from the proof of Theorem \ref{thm:1} shows that either $q\le K$, or
$$
q^{{\sqrt{M}}-1}\le q^{\alpha}<\sigma(q^{\alpha})<2Mq^{\beta_1+\cdots+\beta_k}<(2KMq)^{(K^2+1)^{k+1}}.
$$
In the second case, we get that $M$ is bounded, hence $s$ is bounded, so $N$ is bounded by Heath-Brown's result.

In the first case, Lemma \ref{lem:Bang} shows that in equations  appearing in the right hand side of equations \eqref{eq:1}, the numbers 
$\lambda_i+1$ are bounded for $i=k+1,\ldots,s$. Let $\Gamma$ be a bound for $\lambda_i$ for $i=k+1,\ldots,s$. For each $\lambda\in \{2,\ldots,\Gamma\}$ and fixed value of $q\le K$, equation
$$
\frac{p^{\lambda+1}-1}{p-1}=p^{\lambda}+p^{\lambda-1}+\cdots+p+1=q^{\beta}
$$
in the unknowns $p$ and $\beta$ has only finitely many effectively computable solutions $(p,\beta)$. Indeed, this follows because if we write $P(t)$ for the largest prime factor of the positive integer $t$, then it is known that if $f(X)\in \Z[X]$ is a polynomial with at least two distinct roots, then $P(f(n))$ tends to infinity with 
$n$ in an effective way. Now we only have to invoke this result for the polynomial $f(X)=(X^{\lambda+1}-1)/(X-1)$ whose $\lambda\ge 2$ roots are all distinct,
and for  the equation $P(f(p))=q\le K$. 
Thus, all primes $p_{k+1},\ldots,p_s$ are bounded, and therefore so is their number $s-k$. Hence, $s$ is bounded, therefore $N$ is bounded by Heath--Brown's result.

\medskip

{\bf Acknowledgements.} The authors thank Luis Gallardo for correcting an error in an earlier version of the paper. F.~L. thanks Professor Arnold Knopfmacher for useful advice.  This paper was written while F.~L. was in sabbatical from the Mathematical Institute UNAM from January 1 to June 30, 2011 and supported by a PASPA fellowship from DGAPA.

\medskip

\noindent MSC2010: 11A25.

\end{document}